\newcommand\cA{{\mathcal A}}
\newcommand\cC{{\mathcal C}}
\newcommand\cF{{\mathcal F}}
\newcommand\cG{{\mathcal G}}
\theoremstyle{plain}
\newtheorem{theorem}{Theorem}[section]
\newtheorem{lemma}[theorem]{Lemma}
\newtheorem{corollary}[theorem]{Corollary}
\newtheorem{conjecture}[theorem]{Conjecture}
\theoremstyle{definition}
\newtheorem{claim}[theorem]{Claim}
\newcommand\lref[1]{Lemma~\ref{lem:#1}}
\newcommand\tref[1]{Theorem~\ref{thm:#1}}
\newcommand\cref[1]{Corollary~\ref{cor:#1}}
\newcommand\clref[1]{Claim~\ref{clm:#1}}
\newcommand\cjref[1]{Conjecture~\ref{conj:#1}}
\title{Families that remain $k$-Sperner even after omitting an element of their ground set}
\author{Bal\'azs Patk\'os\thanks{Alfr\'ed R\'enyi Institute of Mathematics, P.O.B. 127, Budapest H-1364, Hungary. Email: patkos@renyi.hu. Research supported by
    Hungarian National Scientific Fund, grant number: PD-83586 and the J\'anos Bolyai Research Scholarship of the Hungarian Academy of Sciences. Research was done while the author was visiting Zhejiang Normal University, Jinhua, China.}}
\begin{document}
\maketitle

\begin{abstract}
A family $\cF\subseteq 2^{[n]}$ of sets is said to be $l$-trace $k$-Sperner if for any $l$-subset $L \subset [n]$ the family $\cF|_L=\{F|_L:F \in \cF\}=\{F \cap L: F \in \cF\}$ is $k$-Sperner, i.e. does not contain any chain of length $k+1$. The maximum size that an $l$-trace $k$-Sperner family $\cF \subseteq 2^{[n]}$ can have is denoted by $f(n,k,l)$.
For pairs of integers $l<k$, if in a family $\cG$ every pair of sets satisfies $||G_1|-|G_2||<k-l$, then $\cG$ possesses the $(n-l)$-trace $k$-Sperner property. Among such families, the largest one is $\cF_0=\{F\in 2^{[n]}: \lfloor \frac{n-(k-l)}{2}\rfloor+1 \le |F| \le \lfloor \frac{n-(k-l)}{2}\rfloor +k-l\}$ and also $\cF'_0=\{F\in 2^{[n]}: \lfloor \frac{n-(k-l)}{2}\rfloor \le |F| \le \lfloor \frac{n-(k-l)}{2}\rfloor +k-l-1\}$ if $n-(k-l)$ is even.
In an earlier paper, we proved that this is asymptotically optimal for all pair of integers $l<k$, i.e. $f(n,k,n-l)=(1+o(1))|\cF_0|$. In this paper we consider the case when $l=1$, $k\ge 2$, and prove that $f(n,k,n-1)=|\cF_0|$ provided $n$ is large enough. We also prove that the unique $(n-1)$-trace $k$-Sperner family with size $f(n,k,n-1)$ is $\cF_0$ and also $\cF'_0$ when $n+k$ is odd.
\end{abstract}

\textit{AMS Mathematics subject Classification}: 05D05

\section{Introduction}

We use standard notation. The set of the first $n$ positive integers is denoted by $[n]$. For a set $X$ the family of all subsets of $X$, all $i$-subsets of $X$, all subsets of $S$ of size at most $i$, all subsets of $S$ of size at least $i$ are denoted by $2^X, \binom{X}{i}, \binom{X}{\le i}, \binom{X}{\ge i}$, respectively. A chain of length $k$ is a family of $k$ sets satisfying $F_1 \subset F_2 \subset ... \subset F_k$. A maximal chain $\cC \subseteq 2^{[n]}$ is a chain of length $n+1$.

Let $\Sigma(n,m)=\sum_{i=1}^m\binom{n}{\lfloor \frac{n-m}{2}\rfloor+i}$ denote the sum of the $m$ largest binomial coefficients of order $n$.

A typical problem in extremal set system theory is to determine how many sets a family $\cF \subseteq 2^{[n]}$ may contain if it satisfies some prescribed property. As one of the first such result, Erd\H os \cite{E} in 1945 proved that if a family $\cF \subseteq 2^{[n]}$ does not contain any chain of length $k+1$ (families with this property are called \textit{$k$-Sperner families}), then the size of $\cF$ cannot exceed $\Sigma(n,k)$ and the only $k$-Sperner family of this size is $\{F\in 2^{[n]}: \lfloor \frac{n-k}{2}\rfloor+1 \le |F| \le \lfloor \frac{n-k)}{2}\rfloor +k\}$ if $n+k$ is odd, and also $\{F\in 2^{[n]}: \lfloor \frac{n-k}{2}\rfloor \le |F| \le \lfloor \frac{n-k}{2}\rfloor +k-1\}$ if $n+k$ is even. The case $k=1$ was proved by Sperner \cite{S} in 1928.

The \textit{trace} of a set $F$ on another set $X$ is $F \cap X$ and is denoted by $F|_X$. The trace of a family $\cF$ on $X$ is the family of traces $\cF_X=\{F|_X|F \in \cF\}$. The fundamental result about traces of families, known as Sauer-lemma, was proved in the early 70's independently by Sauer \cite{Sa}, Shelah \cite{Sh}, and Vapnik and Chervonenkis \cite{VC} and states that if for a family $\cF \subseteq 2^{[n]}$ there exists no set $K$ of size $k$ such that $\cF|_K=2^{K}$ (i.e. all subsets of $K$ appear as a trace of a set in $\cF$), then $|\cF| \le \sum_{i=0}^{k-1} \binom{n}{i}$ holds. This bound is sharp as shown by the families $\binom{[n]}{\le k-1}$ and $\binom{[n]}{\ge n-k+1}$, but there are lots of other families of this size satisfying the condition of the Sauer-lemma.

One way to make sure that a family $\cF$ satisfies the condition of the Sauer-lemma is to prescribe not to contain any maximal chain as trace in any $k$-subset $K$ of $[n]$. This observation leads to the following notion introduced in \cite{P1}: a family $\cF$ is said to be \textit{$l$-trace $k$-Sperner} if for any $l$-set $L$ the trace $\cF|_L$ is $k$-Sperner. In \cite{P1}, it was proved that if $k \le l$ and $n$ is large enough, then the maximum size $f(n,k,l)$ that an $l$-trace $k$-Sperner family $\cF \subseteq 2^{[n]}$ can have is $\sum_{i=0}^{k-1}\binom{n}{i}$ and the only $l$-trace $k$-Sperner families of this size are $\binom{[n]}{\le k-1}$ and $\binom{[n]}{\ge n-k+1}$, i.e. if the condition of the Sauer-lemma is strengthened to the $l$-trace $k$-Sperner property, then the uniqueness of the 'trivial' extremal families $\binom{[n]}{\le k-1}$ and $\binom{[n]}{\ge n-k+1}$ holds.

The situation is entirely different if for fixed $k$ and $l$ with $k>l$ and $n$ large enough, we consider $(n-l)$-trace $k$-Sperner families, i.e. families of which the traces form a $k$-Sperner family no matter which $l$ elements of the ground set we omit. For any $l$-set $L$ and $G \in 2^{[n]}$ we have $|G|-l \le |G|_{[n]\setminus L}| \le |G|$ and thus if in a family $\cG$ every pair of sets satisfies $||G_1|-|G_2||<k-l$, then $\cG$ possesses the $(n-l)$-trace $k$-Sperner property. We obtain the largest such family if we take $\cG_0=\{G\in 2^{[n]}: \lfloor \frac{n-(k-l)}{2}\rfloor+1 \le |G| \le \lfloor \frac{n-(k-l)}{2}\rfloor +k-l\}$ and also $\cG'_0=\{G\in 2^{[n]}: \lfloor \frac{n-(k-l)}{2}\rfloor \le |G| \le \lfloor \frac{n-(k-l)}{2}\rfloor +k-l-1\}$ if $n-(k-l)$ is even. The size of $\cG_0$ is $\Sigma(n,k-l)$. In \cite{P1} and \cite{P2} we conjectured that the families $\cG_0$ and $\cG'_0$ are optimal.

\begin{conjecture}
\label{conj:precise} Let $k$ and $l$ be positive integers with $l<k$. Then there exists $n_0=n_0(k,l)$ such that if $n \ge n_0$, then $f(n,k,n-l)= \Sigma(n,k-l)$ holds. 
\end{conjecture}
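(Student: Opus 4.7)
I would prove Conjecture~\ref{conj:precise} by induction on $l$, with the $l=1$ case (the main result of the present paper) as the base. For the inductive step, suppose the conjecture is established for $l-1$, and let $\cF\subseteq 2^{[n]}$ be an arbitrary $(n-l)$-trace $k$-Sperner family. For every $x\in[n]$, the trace $\cF|_{[n]\setminus\{x\}}$ lives on a ground set of size $n-1$ and is itself $((n-1)-(l-1))$-trace $k$-Sperner: for any $(l-1)$-set $L'\subseteq [n]\setminus\{x\}$, the further restriction equals $\cF|_{[n]\setminus L}$ for $L=L'\cup\{x\}$, which is $k$-Sperner by hypothesis on $\cF$. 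Hence the inductive hypothesis gives $|\cF|_{[n]\setminus\{x\}}|\le\Sigma(n-1,k-l+1)$.

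Two sets $F,G\in\cF$ collide in the $x$-trace if and only if $F\triangle G=\{x\}$, so $|\cF|_{[n]\setminus\{x\}}|=|\cF|-p_x(\cF)$, where $p_x(\cF):=|\{\{F,G\}\subseteq\cF:F\triangle G=\{x\}\}|$ counts the Hasse edges of $\cF$ in the direction $x$. Averaging the resulting inequality $|\cF|\le\Sigma(n-1,k-l+1)+p_x(\cF)$ over $x\in[n]$ yields
\[
|\cF|\le\Sigma(n-1,k-l+1)+\frac{t(\cF)}{n},
\]
where $t(\cF):=\sum_{x}p_x(\cF)$ is the total number of Hasse edges of $\cF$ inside $2^{[n]}$. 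A standard Pascal-identity calculation shows
$\Sigma(n,k-l)-\Sigma(n-1,k-l+1)=\sum_{j=1}^{k-l-1}\binom{n-1}{a+j}$ with $a:=\lfloor(n-k+l)/2\rfloor$, and a direct enumeration yields the matching identity $t(\cG_0)=n\sum_{j=1}^{k-l-1}\binom{n-1}{a+j}$. Thus the inductive step reduces to proving the Hasse-edge inequality $t(\cF)\le t(\cG_0)$ for every $(n-l)$-trace $k$-Sperner family.

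\textbf{Main obstacle.} The crux is the Hasse-edge inequality $t(\cF)\le t(\cG_0)$, which asserts that $\cG_0$ simultaneously maximizes both $|\cF|$ and the number of chains of length two in $\cF$ among $(n-l)$-trace $k$-Sperner families. I see two plausible routes. The first is a \emph{compression} approach: identify a shifting operator that preserves the $(n-l)$-trace $k$-Sperner property while not decreasing $t(\cF)$, then argue that an extremal family is fully shifted and analyze its Hasse-edge count level by level. The second is a \emph{derived-family} approach: replace $\cF$ by its edge set $\{(F,F\cup\{x\}):F,F\cup\{x\}\in\cF\}$, interpret this object as encoding a family on a related ground set with a translated level structure, derive its own $(n-l)$-trace constraints, and invoke a variant of the inductive hypothesis. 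A subtlety to watch is the boundary case $k-l=1$, where $t(\cG_0)=0$ and the Hasse-edge inequality degenerates to the statement that $\cF$ must be an antichain; here a dedicated argument showing that a single ``outlier'' edge of $\cF$ extends, together with other sets of $\cF$, to a chain of length $l+2$ in some $(n-l)$-trace seems unavoidable. The uniqueness of $\cG_0$ (and of $\cG'_0$ when $n-(k-l)$ is even) follows by chaining the equality cases of the inductive hypothesis and of the Hasse-edge bound, in analogy with the dichotomy in Erd\H os' theorem for $(k-l)$-Sperner families.
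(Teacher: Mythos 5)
The statement you are addressing is a \emph{conjecture} in this paper: the paper itself proves only the case $l=1$ (its Theorem~\ref{thm:l1}), together with the asymptotic version quoted as Theorem~\ref{thm:asy}, so there is no proof of the full statement to compare yours against, and your argument would need to stand on its own. It does not. Your set-up is fine: the observation that $\cF|_{[n]\setminus\{x\}}$ is $((n-1)-(l-1))$-trace $k$-Sperner, the identity $|\cF|_{[n]\setminus\{x\}}|=|\cF|-p_x(\cF)$, the averaging step, and the Pascal/edge-count computation showing $\Sigma(n,k-l)-\Sigma(n-1,k-l+1)=t(\cG_0)/n$ are all correct, and using the $l=1$ theorem as the base of the induction is legitimate. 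But the entire inductive step then rests on the Hasse-edge inequality $t(\cF)\le t(\cG_0)$, which you explicitly leave unproved (``main obstacle''), offering only two candidate strategies without carrying either out. A reduction to an unproved lemma is not a proof, and this lemma is not a routine technicality.

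Worse, as an absolute statement over all $(n-l)$-trace $k$-Sperner families the inequality is simply false in the boundary case $k-l=1$: there $\cG_0$ is a single level, so $t(\cG_0)=0$, yet the two-element family $\{F,F+x\}$ is $(n-l)$-trace $k$-Sperner (its traces never contain a chain of length $k+1\ge 3$) and has $t=1$. So what your averaging argument actually requires is $t(\cF)\le t(\cG_0)$ for families whose size is already close to $\Sigma(n,k-l)$, i.e.\ a statement about near-extremal families -- and that appears to be of essentially the same depth as the conjecture itself. Note also that the trivial bound $t(\cF)\le n|\cF|/2$ combined with Theorem~\ref{thm:asy} gives only $t(\cF)\lesssim \frac{n}{2}(k-l)\binom{n}{\lfloor n/2\rfloor}$, whereas you need roughly $\frac{n}{2}(k-l-1)\binom{n}{\lfloor n/2\rfloor}$, so no soft size argument closes the gap; one would need genuinely new structural information (e.g.\ a compression that provably preserves the trace property and controls edges, which you have not exhibited). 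As it stands, the proposal is an interesting reformulation -- it shows the conjecture would follow from an edge-maximization statement for near-extremal families -- but it is not a proof, and the case $k-l=1$ shows the key lemma must be stated and attacked in a more refined form than you give.
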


\cjref{precise} was proved asymptotically in \cite{P2} (the case $l=1$, $k=2$ was already proved in \cite{P1}).

\begin{theorem} \cite{P1}
\label{thm:asy} Let $k$ and $l$ be positive integers with $l<k$. Then $f(n,k,n-l)=(1+O(\frac{1}{n^{2/3}})) \Sigma(n,k-l)$ holds.
\end{theorem}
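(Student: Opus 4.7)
The lower bound $f(n,k,n-l)\ge\Sigma(n,k-l)$ is witnessed by the family $\cG_0$ (or $\cG'_0$ when $n-(k-l)$ is even) exhibited before the statement: its sets pairwise differ in size by less than $k-l$, so every trace on an $(n-l)$-subset fits into $k-l$ levels and is in particular $k$-Sperner. The real work is the matching upper bound. Fix an $(n-l)$-trace $k$-Sperner family $\cF\subseteq 2^{[n]}$.

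The starting point is that for every $l$-subset $L\subseteq[n]$, Erd\H os's theorem applied to $\cF|_{[n]\setminus L}$ gives $|\cF|_{[n]\setminus L}|\le \Sigma(n-l,k)$, so $|\cF|\le \Sigma(n-l,k)+r(L)$, where the collapse term $r(L):=|\cF|-|\cF|_{[n]\setminus L}|$ counts sets of $\cF$ sharing a trace with another set. I would then average this inequality over all $l$-subsets $L$. A pair $F\ne F'\in\cF$ at Hamming distance $d:=|F\triangle F'|\le l$ collapses in exactly $\binom{n-d}{l-d}$ of the $l$-subsets, hence $\tfrac{1}{\binom{n}{l}}\sum_L r(L)\le\sum_{d=1}^l N_d\binom{l}{d}/\binom{n}{d}=\Theta\bigl(\sum_d N_d n^{-d}\bigr)$, where $N_d$ is the number of unordered pairs of $\cF$ at Hamming distance $d$. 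A Pascal-type identity unfolds $\Sigma(n-l,k)$ as a sum of binomial coefficients of order $n$ of the same asymptotic order as $\Sigma(n,k-l)$, so the task reduces to bounding the collapse term against the profile of $\cF$.

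The remaining task is to bound $|\cF|$ and the $N_d$'s jointly using the trace hypothesis. The key structural observation is that any $(k+1)$-chain $F_1\subsetneq\cdots\subsetneq F_{k+1}$ in $\cF$ survives in the trace on $[n]\setminus L$ unless some increment $F_{i+1}\setminus F_i$ lies in $L$; requiring this for \emph{every} $l$-subset $L$ is extremely restrictive (for example, with $l=1$ and $n>k$ it forces $\cF$ itself to be $k$-Sperner), and a refined version shows that the profile $(f_0,\dots,f_n)$ of $\cF$ must concentrate on at most $k-l$ consecutive central levels up to an asymptotically negligible tail. Introducing a parameter $t$ that bounds the width of the ``allowed'' window around the centre yields two competing contributions: a tail of order $t\Sigma(n,k-l)/n$ coming from profile mass near the edges of the window, and a collision term of order $\Sigma(n,k-l)/t^{2}$ coming from the averaged collapse count. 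Balancing these at $t\asymp n^{1/3}$ delivers the advertised $O(n^{-2/3})$ loss and the desired bound $|\cF|\le(1+O(n^{-2/3}))\Sigma(n,k-l)$.

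\textbf{Main obstacle.} The hardest step is the quantitative structural one: proving that any near-extremal $\cF$ has profile concentrated on $k-l$ central levels at the $n^{-2/3}$ scale. I expect this to require LYM-type inequalities applied simultaneously across many $l$-traces, a shifting or compression argument to standardise $\cF$ into a canonical near-extremal form, and Kruskal--Katona-style shadow estimates to control the $N_d$ in terms of the profile; the challenge is combining these ingredients so that the error tracked through the argument remains at the $n^{-2/3}$ level rather than the weaker rates that cruder applications would yield.
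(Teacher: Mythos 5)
First, a bookkeeping remark: this theorem is not proved in the paper at all --- it is quoted from \cite{P1}/\cite{P2} --- so there is no in-paper argument to compare with; your proposal has to stand on its own. As it stands it is a plan rather than a proof, and it has a concrete accounting problem at its first step. You bound $|\cF|\le \Sigma(n-l,k)+r(L)$ and treat the averaged collapse term as an error to be pushed to $O(n^{-2/3})\Sigma(n,k-l)$. But $\Sigma(n-l,k)\sim \frac{k}{2^l}\binom{n}{\lfloor n/2\rfloor}$ while $\Sigma(n,k-l)\sim (k-l)\binom{n}{\lfloor n/2\rfloor}$, and $\frac{k}{2^l}<k-l$ for every admissible pair except $(l,k)=(1,2)$. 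So for the extremal family $\cG_0$ itself the collapse term is of main order: $r(L)\approx\bigl((k-l)-\tfrac{k}{2^l}\bigr)\binom{n}{\lfloor n/2\rfloor}$. Consequently ``bounding the collapse term'' is not an error estimate at all; to close the argument you would need a sharp, constant-precise upper bound on $\sum_{d\le l} N_d\binom{l}{d}/\binom{n}{d}$ for an \emph{arbitrary} $(n-l)$-trace $k$-Sperner family, i.e.\ essentially an extremal bound on the number of pairs at Hamming distance at most $l$ (for $\cG_0$, $N_1=\Theta(n\binom{n}{\lfloor n/2\rfloor})$, so the trivial estimate only gives $r$ of main order with an uncontrolled constant). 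That is a problem of comparable difficulty to the theorem itself, and nothing in the proposal addresses it; the phrase ``of the same asymptotic order as $\Sigma(n,k-l)$'' conflates order of magnitude with the asymptotic equality that the statement actually requires.

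The second half of the plan has the same character: the claim that the profile of a near-extremal $\cF$ concentrates on $k-l$ central levels up to an $n^{-2/3}$-scale tail, and the ``window of width $t$'' balancing $t\Sigma/n$ against $\Sigma/t^2$, are statements of the desired conclusion (reverse-engineered from the exponent $2/3$), not arguments; your own ``Main obstacle'' paragraph concedes that the key quantitative step is missing and only lists tools (simultaneous LYM, compression, Kruskal--Katona) that might supply it. The one verifiable structural observation you make --- that for $l=1$, $n>k$, the trace hypothesis forces $\cF$ to be $k$-Sperner because a $(k+1)$-chain has only $k$ gaps to absorb deletions --- is correct and is indeed the same elementary observation the present paper makes at the start of Section 2, but by itself it is far from the asymptotic counting needed. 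So the proposal should be regarded as an outline with the central lemmas unproved, and with its main reduction mis-stated, rather than as a proof of the theorem.
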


Our main result verifies \cjref{precise} for $l=1, k\ge 2$ and it also describes the extremal family.

\begin{theorem}
\label{thm:l1} Let $k\ge 2$ be an integer. Then there exists $n_0=n_0(k)$ such that if $n \ge n_0$ and $\cF \subseteq 2^{[n]}$ is an $(n-1)$-trace $k$-Sperner family, then $|\cF| \le \Sigma(n,k-1)$. Furthermore, equality holds if and only if $\cF$ is the family $\cG_0$ and when $n+k$ is odd, then also if $\cF$ is the family $\cG'_0$.
\end{theorem}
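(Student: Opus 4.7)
The plan is to combine the asymptotic bound of \tref{asy} with a sharp double-counting argument. First, observe that $\cF$ itself must be $k$-Sperner: any chain $F_1\subsetneq F_2\subsetneq\cdots\subsetneq F_{k+1}$ in $\cF$ has $k$ consecutive gaps, so the set of elements of $[n]$ appearing as singleton gaps $F_{j+1}\setminus F_j$ has size at most $k$. For $n>k$ there is some $i$ outside this set, and then the projected chain $F_1\setminus\{i\}\subsetneq\cdots\subsetneq F_{k+1}\setminus\{i\}$ is a genuine $(k+1)$-chain in $\cF|_{[n]\setminus\{i\}}$, contradicting the hypothesis; hence $|\cF|\le\Sigma(n,k)$ by Erd\H os's theorem already.

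Second, set up the key double-counting inequality. For each $i\in[n]$ the $k$-Sperner trace $\cF|_{[n]\setminus\{i\}}$ has size at most $\Sigma(n-1,k)$, and with
\[
e_i:=|\{F\in\cF:i\notin F,\ F\cup\{i\}\in\cF\}|,
\]
the identity $|\cF|_{[n]\setminus\{i\}}|=|\cF|-e_i$ summed over $i$ gives
\[
n|\cF|\le n\,\Sigma(n-1,k)+N, \qquad N:=\sum_{i=1}^n e_i,
\]
where $N$ counts the covering pairs inside $\cF$. A short calculation using Pascal's identity verifies that $|\cG_0|=\Sigma(n,k-1)$, that the above bound is tight at $\cF=\cG_0$, and that $N(\cG_0)=n(\Sigma(n,k-1)-\Sigma(n-1,k))$. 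Thus the theorem reduces to the sharp inequality $N(\cF)\le N(\cG_0)$ with equality characterising $\cG_0$ and (when defined) $\cG'_0$.

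Third, bound $N(\cF)$. Here \tref{asy} together with the stability of Erd\H os's theorem implies that $\cF$ is concentrated on a narrow band of levels around the middle. Any set lying outside the core range $[L,L+k-2]$ of $\cG_0$ (where $L=\lfloor(n-k+1)/2\rfloor+1$) is ``expensive'': for such an outlier $F$, say at level $L-1$, a chain $F\subsetneq G_1\subsetneq\cdots\subsetneq G_{k-1}$ in $\cF$ would produce a forbidden $(k+1)$-chain by the first step, so the up-ideal of $F$ inside $\cF$ must be $(k-1)$-Sperner. A Kruskal--Katona shadow estimate then limits how many middle-level sets above $F$ can lie in $\cF$, and the covering pairs lost thereby strictly exceed those gained by including $F$. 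This yields $N(\cF)<N(\cG_0)$ unless there are no outliers, after which Erd\H os's uniqueness applied to each trace $\cF|_{[n]\setminus\{i\}}$ pins down $\cF$ as a full $(k-1)$-level slab, namely $\cG_0$ (and, when $n+k$ is odd, also $\cG'_0$).

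The main obstacle is the fine accounting for perturbations at a single level: moving one set of $\cG_0$ down to the adjacent level $L-1$ changes $N$ by an amount dominated by a single binomial coefficient, so the cost analysis has to be quantitatively sharp. A secondary subtlety appears when $k=2$, where $\Sigma(n,k-1)<\Sigma(n-1,k)$ for odd $n$ and the direct double counting is no longer sufficient; this case is already handled in \cite{P1} and can be invoked separately. Finally, the two-versus-one parity split in the uniqueness statement arises naturally from the extremal structure of $k$-Sperner families in the traces, but must be threaded carefully through the equality analysis so that neither $\cG_0$ nor $\cG'_0$ is lost as a valid extremal configuration when $n+k$ is odd.
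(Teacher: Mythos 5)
Your plan starts from a genuinely different place than the paper. The paper's proof of \tref{l1} hinges on a LYM-type chain-counting inequality (\lref{chains} and \cref{lym}), obtained by a delicate charging scheme that, for each $k$-chain in $\cF^k$, manufactures a large pairwise-almost-disjoint collection of maximal chains meeting $\cF$ in at most $k-2$ sets, and then shows $c^-\ge c^+$; the bound $\sum_{F\in\cF}1/\binom{n}{|F|}\le k-1$ then gives $|\cF|\le\Sigma(n,k-1)$ at once. Your route is instead a direct double count on traces: $n|\cF|=\sum_i|\cF|_{[n]\setminus\{i\}}|+N$, bound each trace by Erd\H os, and reduce to $N(\cF)\le N(\cG_0)$. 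Both are legitimate strategies in principle, and your preliminary observation that $\cF$ itself is $k$-Sperner is correct and matches the paper's opening remark.

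However, there are two real problems. First, the reduction to $N(\cF)\le N(\cG_0)$ is exactly where the content lives, and what you offer for it is a sketch, not a proof. Invoking \tref{asy} gives an upper bound on $|\cF|$ but not a concentration statement; ``the stability of Erd\H os's theorem'' for $k$-Sperner families is not quoted anywhere in the paper and you would have to supply it. Even granting concentration, the Kruskal--Katona step would have to be made quantitatively explicit: you need to show that every outlier, at any distance from the core band, loses strictly more covering pairs than it gains, and that this accounting is compatible with possible errors in $|\cF|$ of size $\Theta(n^{-2/3}\Sigma(n,k-1))$. You acknowledge that the accounting ``has to be quantitatively sharp,'' which is another way of saying the argument is not finished. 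Second, a factual slip: for $k=2$ and $n$ odd one has $\Sigma(n,1)=\Sigma(n-1,2)$, not a strict inequality, by Pascal's identity $\binom{n}{(n-1)/2}=\binom{n-1}{(n-1)/2}+\binom{n-1}{(n-3)/2}$; so $N(\cG_0)=0$ in that case, the double counting is perfectly consistent, and $k=2$ is not an exceptional obstruction in the way you suggest. What is genuinely delicate there is showing $N(\cF)=0$, i.e.\ that $\cF$ is an antichain, which your sketch does not address. By contrast, the paper sidesteps all of this by proving the LYM inequality directly, handling small and large sets by a separate case split using \tref{asy} and the bound on $f(n,n-1,1)$, and deriving the extremal characterization from the ``moreover'' parts of \lref{chains} and \cref{lym} together with Erd\H os's uniqueness.
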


Note that Sperner's result follows from the case $k=2$ as if $\cF \subseteq 2^{[n]}$ is Sperner, then it is $(n-1)$-trace 2-Sperner. Indeed, if $x \in [n]$ and $F_1,F_2,F_3 \in \cF$ were such that $F_1|_{[n]-x} \subsetneq F_2|_{[n]-x} \subsetneq F_3|_{[n]-x}$ would hold, then there would exist $1 \le i<j\le 3$ with $x \in F_i,F_j$ or $x \notin F_i,F_j$ and thus $F_i \subsetneq F_j$ would contradict the Sperner property of $\cF$. In general it is not true that a $k$-Sperner family possesses the $(n-1)$-trace $(k-1)$-Sperner property, but the largest such family does. \tref{l1} states that no other $(n-1)$-trace $(k-1)$-Sperner family can have larger size.

In the proof of \tref{l1} we will need the next result which follows from the Corollary after Theorem 7 in \cite{P1}.

\begin{theorem}
$$f(n,n-1,1)=O\left(\frac{1}{n}\binom{n}{\lfloor n/2\rfloor}\right) =O\left(\frac{1}{n}\Sigma(n-|F_0|,1)\right).$$
\end{theorem}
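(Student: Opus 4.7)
The plan is to turn the $(n-1)$-trace $1$-Sperner hypothesis into a Hamming-distance condition on $\cF$ and then to combine two applications of the LYM inequality, applied to two antichains built from $\cF$, in order to squeeze out the extra factor of $1/n$ beyond the trivial Sperner bound.

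\textbf{Step 1 (distance condition).} A short case analysis shows that the hypothesis is equivalent to: every pair of distinct $F, G \in \cF$ satisfies $|F\setminus G| \ge 2$ and $|G\setminus F| \ge 2$. Indeed, on $L = [n]\setminus\{x\}$ we have $F|_L \subseteq G|_L$ iff $F \setminus G \subseteq \{x\}$; whenever $|F \setminus G| \le 1$ (with $F \ne G$) one can pick $x$ so that $F|_L$ and $G|_L$ are distinct and comparable, producing a $2$-chain in $\cF|_L$, contrary to assumption.

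\textbf{Step 2 (augmented antichains).} Next I would introduce
$$\cF^+ := \{F \cup \{a\} : F \in \cF,\ a \in [n]\setminus F\}, \qquad \cF^- := \{F \setminus \{a\} : F \in \cF,\ a \in F\}.$$
The distance condition from Step 1 makes the representations injective (a collision $F \cup \{a\} = F' \cup \{a'\}$ with $F \ne F'$ would force $|F \triangle F'| \le 2$, and similarly for $\cF^-$) and, crucially, makes both $\cF^+$ and $\cF^-$ antichains: any proper containment $F \cup \{a\} \subsetneq F' \cup \{a'\}$ or $F \setminus \{a\} \subsetneq F' \setminus \{a'\}$ with $F \ne F'$ would force $|F \setminus F'| \le 1$, again contradicting Step 1.

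\textbf{Step 3 (two LYM inequalities).} Applying LYM to $\cF^+$ and simplifying via $(n-|F|)/\binom{n}{|F|+1} = (|F|+1)/\binom{n}{|F|}$ yields $\sum_{F \in \cF} (|F|+1)/\binom{n}{|F|} \le 1$, while LYM on $\cF^-$ gives $\sum_{F \in \cF} (n-|F|+1)/\binom{n}{|F|} \le 1$. Adding the two inequalities and using $(|F|+1) + (n-|F|+1) = n+2$ one obtains
$$(n+2)\sum_{F \in \cF} \frac{1}{\binom{n}{|F|}} \le 2,$$
from which $|\cF| \le \binom{n}{\lfloor n/2\rfloor}\sum_{F \in \cF} 1/\binom{n}{|F|} \le \tfrac{2}{n+2}\binom{n}{\lfloor n/2\rfloor}$ follows at once. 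The only genuinely creative step is Step 2: the augmentation is tailored so that the distance condition from Step 1 produces two independent LYM constraints whose sum supplies the extra factor of $n+2$ in the denominator, sharpening Sperner's bound to the claimed $O\!\left(\tfrac{1}{n}\binom{n}{\lfloor n/2\rfloor}\right)$.
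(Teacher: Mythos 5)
Your argument is correct, and it gives a clean, self-contained proof with the explicit bound $|\cF|\le \frac{2}{n+2}\binom{n}{\lfloor n/2\rfloor}$. Step~1 is right: for distinct $F,G\in\cF$, $F|_{[n]-x}\subseteq G|_{[n]-x}$ is equivalent to $F\setminus G\subseteq\{x\}$, and a short case check shows $|F\setminus G|\le 1$ always yields some $x$ with a proper containment of distinct traces, so the $(n-1)$-trace $1$-Sperner property is exactly the ``one-sided distance'' condition $|F\setminus G|\ge 2$ and $|G\setminus F|\ge 2$. Step~2 is the nice idea: the injectivity of $(F,a)\mapsto F\cup\{a\}$ (resp.\ $F\setminus\{a\}$) and the antichain property of $\cF^+$ (resp.\ $\cF^-$) both follow immediately from this condition, since a collision or a proper containment forces $|F\setminus F'|\le 1$ for some pair $F\ne F'$, while the case $F=F'$ is ruled out by equal cardinalities. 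Step~3's algebra checks out and gives $\sum_{F\in\cF}(n+2)/\binom{n}{|F|}\le 2$.

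Two very minor points. First, you should explicitly dispose of the degenerate cases $\emptyset\in\cF$ or $[n]\in\cF$: then the distance condition forces $|\cF|=1$ (so the bound is trivial), and this matters because for $F=\emptyset$ (resp.\ $F=[n]$) the conversion $\frac{|F|}{\binom{n}{|F|-1}}=\frac{n-|F|+1}{\binom{n}{|F|}}$ (resp.\ its mate) does not hold, so the two LYM sums really require $1\le|F|\le n-1$ throughout. Once $|\cF|\ge 2$, the distance condition automatically gives $2\le|F|\le n-2$ and the argument is airtight. Second, on the comparison: the paper does not actually prove this statement but cites it from the Corollary to Theorem~7 of \cite{P1}. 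Your proof is thus an independent, self-contained derivation; given that the rest of this paper (and likely \cite{P1}) proceeds by counting permutations and maximal chains \`a la Lubell, the ``augment to two antichains and add two LYM inequalities'' route you take appears to be a genuinely different and arguably more elementary mechanism for extracting the extra factor of $1/n$ beyond Sperner.
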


\textbf{Notation.} The complement of a set $F$ will be denoted by $\overline{F}$. For convenience, we will write $F+z$ instead of $F \cup \{z\}$ and $F-z$ instead of $F\setminus \{z\}$. Also, if $\sigma$ is a permutation of elements of an $m$-set $M$, then we will think of $\sigma$ as an ordering of the elements of $M$ and write $\sigma_1,\sigma_2,...,\sigma_m$ for the first, second, ... , $m$th element in the ordering. The \textit{index} of an element $x$ in the permutation $\sigma$ is the integer $i$ for which $\sigma_i=x$ and will be denoted by $ind(x)$. The set of permutations of $[n]$ is denoted by $S_n$.
Let $\pi \in S_n$, $a,b,c,d\in [n]$ and $a,b,c,d \notin Y\subset [n]$. Then we will write $\pi$ as
$$........................a.............bcYd......................$$
to denote the fact that $ind(a)< ind(b)=ind(c)-1< ind(d)-2$ and exactly those elements $y$ belong to $Y$ for which $ind(c)<ind(y)<ind(d)$ holds.
Furthermore, if we are interested in the relation of the indices of elements of two not necessarily disjoint subsets $A=\{a,b,c,d\}$ and $X=\{x,y,z\}$ of $[n]$, then the permutation $\pi$ is written as
\[
......................
\begin{array}{ccccc}
. & . & x & y & z \\
a & b & c & . & d
\end{array}
........................
\]
to denote $ind(a)=ind(b)-1=ind(c)-2=ind(x)-2=ind(y)-3=ind(z)-4=ind(d)-4$ and thus $x=c, z=d$ and $|A\cap X|=2$ hold.

\section{Proof of \tref{l1}}

Let $\cF$ be an $(n-1)$-trace $k$-Sperner family and let $\cC \subset 2^{[n]}$ be a maximal chain. Note that if $n>k$, then $|\cF \cap \cC| \le k$ holds. Indeed, if $G_1 \subsetneq G_2 \subsetneq ... \subsetneq G_{k+1} \subseteq [n]$ holds, then either $G_1$ or $[n]\setminus G_{k+1}$ is non-empty or at least one of the sets $G_i \setminus G_{i-1}$ contains two elements. When omitting an element from one such set, the traces of the $G_i$'s would still form a chain of length $k+1$.

Let $c^-,c, c^+$ denote the number of maximal chains $\cC \subseteq 2^{[n]}$ such that $|\cF \cap \cC|$ is less than $k-1$, exactly $k-1$, exactly $k$, respectively. By the above observation, we know that $c^-+c+c^+=n!$ holds. Let $\cF^k=\{(F_1,F_2,...,F_k):F_1 \subsetneq F_2 \subsetneq ... \subsetneq F_k, F_i \in \cF\}$ denote the set of $k$-chains in $\cF$. The main step of the proof of \tref{l1} is the following lemma which states that on average a maximal chain contains at most $k-1$ sets from an $(n-1)$-trace $k$-Sperner family.

\begin{lemma}
\label{lem:chains}
Let $\cF$ be an $(n-1)$-trace $k$-Sperner family such that $4 \le |F| \le n-1$ holds for all $F \in \cF$. Then the inequality $c^- \ge c^+$ holds. Moreover, if there exists a $k$-chain $(F_1,F_2,...,F_k) \in \cF^k$ with $5\le |F_1|$ and $|F_{i+1} \setminus F_i|=1$ for all $i=1,2,...,k-1$, then $c^- > c^+$ holds.
\end{lemma}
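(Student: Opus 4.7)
The plan is to prove $c^-\ge c^+$ by constructing for each maximal chain $\cC$ counted by $c^+$ an associated chain $\phi(\cC)$ counted by $c^-$, via a two-element transposition in the underlying permutation whose effect is controlled by the $(n-1)$-trace $k$-Sperner hypothesis. Fix $\cC\leftrightarrow\pi\in S_n$ with $\cF\cap\cC=\{F_1\subsetneq\cdots\subsetneq F_k\}$ occupying positions $p_1<\cdots<p_k$ in $\pi$. The hypothesis $|F_1|\ge 4$ and $|F_k|\le n-1$ guarantees that $x:=\pi_{p_1}\in F_1$ and $y:=\pi_{p_k+1}\in [n]\setminus F_k$ are both available, and I define $\phi(\cC):=\cC_{\pi'}$ where $\pi'$ is obtained from $\pi$ by transposing $x$ and $y$.

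The central step is to invoke the trace condition in two directions to show that the new chain loses both $F_1$ and $F_k$ from its $\cF$-intersection. If $F_k-x+y\in\cF$, then tracing to $[n]-x$ produces the $(k+1)$-chain $F_1-x\subsetneq F_2-x\subsetneq\cdots\subsetneq F_k-x\subsetneq F_k-x+y$ in $\cF|_{[n]-x}$, contradicting $k$-Spernerness of the trace. Symmetrically, if $F_1-x+y\in\cF$, its trace in $\cF|_{[n]-y}$ is $F_1-x$, and $F_1-x\subsetneq F_1\subsetneq F_2\subsetneq\cdots\subsetneq F_k$ is a chain of length $k+1$ in $\cF|_{[n]-y}$, a contradiction again. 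Since $\cC$ and $\phi(\cC)$ share all sets at positions outside $[p_1,p_k]$ (none of which lies in $\cF$), and the boundary positions $p_1$ and $p_k$ on $\phi(\cC)$ now carry the forbidden sets $F_1-x+y$ and $F_k-x+y$, only the $p_k-p_1-1$ interior positions can contribute to $\cF\cap\phi(\cC)$. In the tight case $p_k-p_1=k-1$ this is at most $k-2$, so $\phi(\cC)$ lies in $c^-$.

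For the non-tight case (some $|F_{i+1}|-|F_i|\ge 2$) the interior interval has more than $k-2$ positions and the simple swap does not a priori force $|\cF\cap\phi(\cC)|\le k-2$; here I would supplement the argument by a gap-position analysis. For each gap $q\in(p_i,p_{i+1})$, one has $G_q\notin\cF$ (else $|\cF\cap\cC|\ge k+1$) and also $G_q-x\notin\cF$, because the insertion $F_1-x\subsetneq\cdots\subsetneq F_i-x\subsetneq G_q-x\subsetneq F_{i+1}-x\subsetneq\cdots\subsetneq F_k-x$ would produce a $(k+1)$-chain in $\cF|_{[n]-x}$. A refined swap — perhaps a compound transposition chosen according to the gap structure of $(F_1,\ldots,F_k)$ — should extend these restrictions to eliminate enough interior sets $G_q-x+y$ from $\cF$, driving $|\cF\cap\phi(\cC)|\le k-2$ in general. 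Injectivity of $\phi$ should follow from the canonical specification of $x$ and $y$ as the elements at positions $p_1$ and $p_k+1$ of $\pi$, which are in turn recoverable from $\phi(\cC)$ by locating the boundary of the modified segment.

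For the moreover clause, the assumption $|F_1|\ge 5$ together with the given tight $k$-chain provides an extra candidate for $x\in F_1$ beyond the one used to define $\phi$; this yields at least one additional $c^-$-chain outside the image of $\phi$, producing the strict inequality $c^->c^+$. The hardest part will be controlling the non-tight case and rigorously verifying injectivity, where I expect the careful permutation-diagram bookkeeping foreshadowed by the paper's notation (tracking the simultaneous indices of $x$, $y$, the $p_i$'s, and any auxiliary swap elements) to be essential.
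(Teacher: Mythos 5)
Your idea of swapping $x=\pi_{p_1}$ with $y=\pi_{p_k+1}$ is exactly the construction the paper uses for the ``type I'' case (where all the gaps $|F_{i+1}\setminus F_i|$ equal $1$), and your two trace computations showing $F_1-x+y\notin\cF$ and $F_k-x+y\notin\cF$ are correct. However, there are two genuine gaps, both of which you flag yourself, and both of which require real ideas rather than bookkeeping to close.

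First, injectivity of $\phi$ is not true as stated and cannot be recovered just from knowing $\phi(\cC)$. From a chain $\cC'=\phi(\cC)$ you cannot ``locate the boundary of the modified segment'': the positions $p_1,\ldots,p_k$ were determined in $\cC$ by where $\cF$ sits, but in $\cC'$ the intersection with $\cF$ has size at most $k-2$ and can sit anywhere, so the original boundary positions are not reconstructible. The paper does \emph{not} build an injection $c^+\to c^-$; it instead attaches to each $k$-chain $(F_1,\ldots,F_k)\in\cF^k$ a whole \emph{family} of derived maximal chains (parametrized by a choice of $x\in F_1$ and $z\notin F_k$), shows each such derived chain lands in $c^-$, restricts to the subfamily $\pi^*$ with $ind(z)\le ind(y_1)-2$, and then proves (Claim~\ref{clm:t1}) that a fixed permutation can arise from at most \emph{two} type~I $k$-chains. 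The resulting factor is $(|F_1|-2)/2\ge 1$ when $|F_1|\ge 4$, which is what makes the count close. This ``many-to-few with bounded overlap'' argument is structurally different from your injective map, and there is no obvious way to upgrade your $\phi$ to an injection.

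Second, and more seriously, the non-tight case is not merely an issue of supplementing with a ``gap-position analysis''; it requires a different construction. You correctly observe that $G_q-x\notin\cF$ for interior gap positions, but the set on your $\phi(\cC)$ at that position is $G_q-x+y$, about which you prove nothing, and there can be far more than $k-2$ interior positions. The paper's construction for a type II-$\ell$ chain is genuinely different: it removes $x$ from the first $\ell$ sets, threads a chosen shuffle $\sigma$ of $F_{\ell+1}\setminus F_\ell$ through the middle, and appends $z$ to the last $k-\ell$ sets; the entire middle block $\cA_2$ is then shown to be disjoint from $\cF$. Without something like this you cannot place the type~II derived chains in $c^-$ at all. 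Your proposal correctly sets up the main object (a swap modification of $\pi$ controlled by the trace hypothesis) and identifies exactly the two places where work is needed, but neither gap is closable by the ``compound transposition'' or ``boundary recovery'' you gesture at; the actual fixes---the overlap-bounding Claims~\ref{clm:t1}--\ref{clm:t2} and the $\sigma$-shuffle for type~II---are substantive additional ideas.
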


\begin{proof}
Let $\cF$ be an $(n-1)$-trace $k$-Sperner family. We will say that $(F_1,F_2,...,F_k) \in \cF^k$ is of type I if  $|F_{i+1}\setminus F_i|=1$ holds for all $1 \le i \le k-1$ and $(F_1,F_2,...,F_k) \in \cF^k$ is of type II-$\ell$ if  $|F_{i+1}\setminus F_i|=1$ holds for all $1 \le i \le \ell-1$ and $|F_{\ell+1} \setminus F_{\ell}| \ge 2$. Note that $$c^+=\sum_{(F_1,F_2,...,F_k)\in \cF^k}(n-|F_k|)!\prod_{i=1}^k(|F_i|-|F_{i-1}|)!$$ where $|F_0|$ is defined to be 0.

Let $(F_1,F_2,..,F_k) \in \cF^k$ be a $k$-chain of type I and $x \in F_1, z \notin F_k$. Then let $\cC(x,z,F_1,F_2,...,F_k)$ denote the set of those maximal chains that contain $(F_i-x)+z$ for all $1 \le i \le k$ and $F_k+z$. Note that if $\{x,z\}\neq \{x',z'\}$, then $\cC(x,z,F_1,F_2,...,F_k) \cap \cC(x',z',F_1,F_2,...,F_k)=\emptyset$ as the sets of size $|F_1|$ in $\cC(x,z,F_1,F_2,...,F_k)$ and $\cC(x',z',F_1,F_2,...,F_k)$ are $(F_1-x)+z$ and $(F_1-x')+z'$, respectively. Therefore writing $\cC(F_1,F_2,...,F_k)=\bigcup_{x\in F_1, z \notin F_k}\cC(x,z,F_1,F_2,...,F_k)$ we have
$$|\cC(F_1,F_2,...,F_k)|=\sum_{x \in F_1, z \notin F_k}|\cC(x,z,F_1,F_2,...,F_k)|=$$
$$|F_1|\cdot(n-|F_k|)\cdot|F_1|!(n-|F_k|-1)!=|F_1|\cdot(n-|F_k|)!\prod_{i=1}^k(|F_i|-|F_{i-1}|)!$$

\begin{claim}
\label{clm:t1good}Let $(F_1,F_2,..,F_k) \in \cF^k$ be a $k$-chain of type I and $x \in F_1, z \notin F_k$. Then for any $\cC \in \cC(x,z,F_1,F_2,...,F_k)$ we have $|\cC \cap \cF| \le k-2$. 
\end{claim}

\begin{proof}[Proof of Claim]
Let $\cC \in \cC(x,z,F_1,F_2,...,F_k)$ be a maximal chain. If $C \in \cC$ with $C \subseteq (F_1 -x)+z$, then $C \notin \cF$ as $C|_{[n]-z} \subsetneq F_1|_{[n]-z} \subsetneq ... \subsetneq F_k|_{[n]-z}$ would form a chain of length $k+1$ in $\cF|_{[n]-z}$. Also, if $C \supseteq F_k+z$, then $C \notin \cF$ as $F_1,F_2,...,F_k,C$ would form a $(k+1)$-chain even without omitting any element of the ground set $[n]$.

Finally, it cannot happen that $F_i'=(F_i -x)+z \in \cF$ holds for all $2 \le i \le k$ as then $F_1|_{[n]-x} \subsetneq F_2|_{[n]-x} \subsetneq F_2'|_{[n]-x} \subsetneq ... \subsetneq F_n'|_{[n]-x}$ would be a $(k+1)$-chain in $\cF|_{[n]-x}$.
\end{proof}

Let $(F_1,F_2,..,F_k) \in \cF^k$ be a $k$-chain of type II-$\ell$ with $1 \le \ell$, $x \in F_1, z \notin F_k$ and $\sigma$ be a permutation of $F_{\ell+1} \setminus F_{\ell}$. Then writing $y_i$ for the unique element of $F_{i+1}\setminus F_i$ for $1 \le i \le \ell-1$ and $m=|F_{\ell+1}\setminus F_{\ell}|$, let $\cC(x,z,\sigma,F_1,F_2,...,F_k)$ denote the set of those maximal chains that contain all sets from 
$$\cA_1=\{F_i -x:1 \le i \le \ell\},$$ 
$$\cA_2=\{(F_{\ell}-x) +\sigma_1,F_{\ell}+\sigma_1, (F_{\ell}+\sigma_1)+\sigma_2,...,F_{\ell+1}-\sigma_{m}, (F_{\ell+1}-\sigma_{m}) +z\}, $$
$$\cA_3=\{F_j+z:\ell +1 \le j \le k\}.$$  
Again, it is easy to see that for a fixed $k$-chain $(F_1,F_2,...,F_k)$ of type II the sets of chains $\cC(x,z,\sigma,F_1,F_2,...,F_k)$ are pairwise disjoint, therefore writing $\cC(F_1,F_2,...,F_k)=$ \newline $\bigcup_{x\in F_1, z \notin F_k, \sigma}\cC(x,z,\sigma,F_1,F_2,...,F_k)$ we have
$$|\cC(F_1,F_2,...,F_k)|=\sum_{x \in F_1, z \notin F_k,\sigma}|\cC(x,z,\sigma,F_1,F_2,...,F_k)|=$$
$$|F_1|\cdot(n-|F_k|)\cdot (|F_1|-1)!(n-|F_k|-1)!\prod_{i=2}^k(|F_i|-|F_{i-1}|)!=(n-|F_k|)!\prod_{i=1}^k(|F_i|-|F_{i-1}|)!$$

\begin{claim}
\label{clm:t2+good}Let $(F_1,F_2,..,F_k) \in \cF^k$ be a $k$-chain of type II-$\ell$ with $2\le \ell$, $x \in F_1, z \notin F_k$ and $\sigma$ as above. Then for any $\cC \in \cC(x,z,\sigma,F_1,F_2,...,F_k)$ we have $|\cC \cap \cF| \le k-2$. 
\end{claim}

\begin{proof}[Proof of Claim]
Let $\cC \in \cC(x,z,\sigma,F_1,F_2,...,F_k)$ be a maximal chain. If $C \in \cC$ with $C \subseteq F_1 -x$, then $C \notin \cF$, as then $C \subsetneq F_1$ would hold and $\cF$ would contain a $(k+1)$-chain.

Note also that $\cA_2 \cap \cC=\emptyset$. Indeed, for any $1 \le j < |F_{\ell+1}\setminus F_{\ell}|$ and writing $A_j=(...((F_\ell+\sigma_1)+\sigma_2)+...)+\sigma_j$ we have $F_\ell\subsetneq A_j \subsetneq F_{\ell+1}$ and thus $A_j$ and the $F_i$'s would form a $(k+1)$-chain. Also, the traces of $A_0=(F_{\ell}-x) +\sigma_1$ and the $F_i$'s would form a $(k+1)$-chain in $\cF|_{[n]-x}$ and the traces of $A_m=(F_{\ell+1}-\sigma_m)+z$ and the $F_i$'s would form a $(k+1)$-chain in $\cF|_{[n]-z}$. (These two statements use the fact that $m=|F_{\ell+1}\setminus F_{\ell}| \ge 2$.)

We obtained that $\cC \cap \cF$ might contain at most $\ell-1$ sets of $\cA_1$ and some sets $C_1,...,C_t$ containing $F_{\ell+1}+z$. Observe that $t \le k-\ell-1$ as otherwise $F_1,...,F_{\ell+1}$ together with $C_1, ...,C_t$ would form a chain of length at least $k+1$. Therefore $|\cC \cap \cF|\le \ell-1+k-\ell-1=k-2$ as stated by the Claim.
\end{proof}

We introduce further notation. First note that maximal chains are in a one-to-one correspondence with permutations of the ground set $[n]$ as with any maximal chain $\cC=\{F_0,F_1,...,F_n\}$ one can associate the permutation $\pi=\pi(\cC)$ such that $\pi_i=F_i\setminus F_{i-1}$. The set of permutations corresponding to maximal chains in $\cC(x,z,\sigma,F_1,F_2,...,F_k)$, $\cC(x,z,F_1,F_2,...,F_k)$, $\cC(F_1,F_2,...,F_k)$ will be denoted $\pi(x,z,\sigma,F_1,F_2,...,F_k)$, $\pi(x,z,F_1,F_2,...,F_k)$, $\pi(F_1,F_2,...,F_k)$, respectively. Permutations $\pi \in S_n$ belonging to $\pi(x,z,F_1,F_2,...,F_k)$ for a $k$-chain of type I look like this

$$ ..................z.......................y_1y_2...y_{k-1}x.....................................\hskip 0.5truecm .$$
Knowing $\pi$, $ind(x)$ and $ind(z)$ we are able to recover the $F_i$'s by 
$$F_i=(\{\pi_j: j \le ind(x)-k+i-1\}-\pi_{ind(z)})+\pi_{ind(x)}.$$

Permutations $\pi \in S_n$ belonging to $\pi(x,z,\sigma,F_1,F_2,...,F_k)$ for a $k$-chain of type II-$\ell$ look like this
$$ .....................y_1y_2....y_{\ell-1}\sigma_1x\sigma_2....\sigma_{m-1}z\sigma_mY_{\ell+1}....Y_{k-1}................\hskip 0.5truecm .$$
Just as for $k$-chains of type I, we are able to recover the $F_i$'s if we know $\pi$, $ind(y_i)$ $i=1,...,\ell-1$, $ind(x)$, $ind(z)$, $\max\{ind(y):y \in F_{j+1} \setminus F_{j}\}$ $j=\ell +1,...,k-1$.

For every $k$-chain $(F_1,F_2,...,F_k) \in \cF^k$ we have defined  a set of maximal chains that contain at most $k-2$ sets from $\cF$. To show that the union of these sets is large we need to prove that there is not much of an overlap among them. We are not able to fully establish such a result, but we manage to prove such statements for subsets of the $\cC(F_1,F_2,...,F_k)$'s. For every $k$-chain $(F_1,F_2,...,F_k) \in \cF^k$ of type I, let $\pi^*(x,z,F_1,F_2,...,F_k)=\{\pi \in \pi(x,z,F_1,F_2,...,F_k): ind(z) \le ind(y_1)-2\}$, while for $k$-chains of type II-$\ell$, let $\pi^*(x,z,\sigma,F_1,F_2,...,F_k)$ equal $\pi(x,z,\sigma,F_1,F_2,...,F_k)$. $\cC^*(x,z,F_1,F-2,...,F_k)$ denotes the set of corresponding maximal chains and we obtain  $\pi^*(F_1,F_2,...,F_k)$ and $\cC^*(F_1,F_2,...,F_k)$ by taking union over all $x \in F_1,z \notin F_k$ and $\sigma$ being a permutation of $F_{\ell+1}\setminus F_\ell$. Clearly, for any $k$-chain of type I we have 
\[
|\pi^*(F_1,F_2,...,F_k)|=(|F_1|-2)\cdot(n-|F_k|)!\prod_{i=1}^k(|F_i|-|F_{i-1}|)!
\]

To make the reasoning in the previous paragraph more formal we need the following final notation. For any maximal chain $\cC$ let $s^*(\cC,\cF)$ denote the number of $k$-chains $(F_1,F_2,...,F_k)$ in $\cF^k$ such that $\cC \in \cC^*(F_1,F_2,...,F_k)$ and let $s^*(F_1,F_2,...,F_k)=\max\{s^*(\cC,\cF): \cC$ is a maximal chain with $F_1,F_2,...,F_k \in \cC\}$. By \clref{t1good}, and \clref{t2+good}, we have

$$\label{1} c^-\ge \sum_{(F_1,F_2,...,F_k)\in \cF^k}\frac{|\cC^*(F_1,F_2,...,F_k)|}{|s^*(F_1,F_2,...,F_k)|}.$$

The following two claims will allow us to establish good upper bounds on $s^*(F_1,F_2,...,F_k)$. 

\begin{claim}
\label{clm:t1} For any $\pi \in S_n$ there exists at most two $k$-chains $(F_1,F_2,...,F_k)$ of type I in $\cF^k$ such that $\pi \in \pi(F_1,F_2,...,F_k)$ holds.
\end{claim}

\begin{proof}[Proof of Claim]
Let $(F_1,F_2,...,F_k),(F'_1,F'_2,...,F'_k)$ be $k$-chains of type I and $\pi \in S_n$ such that $\pi \in \pi(x,z,F_1,F_2,...,F_k)\cap \pi(x',z',F'_1,F'_2,...,F'_k)$. We will show that $|ind(x)-ind(x')|=1$ and the claim will follow.
Suppose first that $ind(x)=ind(x')$ and thus $x=x',y_i=y'_i$ hold for all $i=1,2,...,k-1$. Then we must have $ind(z)\neq ind(z')$ as otherwise the two $k$-chains would be the same. But then the traces $F'_1|_{[n]-z'},F_1|_{[n]-z'},...,F_k|_{[n]-z'}$ form a chain of length $k+1$.

Suppose next that $ind(x')+2 \le ind(x)$. Then there is at most one $j$ such that $ind(z)= ind(y'_j)$  and therefore we have $F'_1|_{[n]-z} \subsetneq ... \subsetneq  F'_j|_{[n]-z} \subsetneq F'_{j+2}|_{[n]-z} \subsetneq ...\subsetneq F'_k|_{[n]-z}$. Furthermore if $z \neq x'$, then $x' \in F_{k-1},F_k$ holds. Therefore, in any case, $F'_k|_{[n]-z} \subsetneq F_{k-1}|_{[n]-z} \subsetneq F_k|{[n]-z}$ and thus $F'_1|_{[n]-z} \subsetneq ... \subsetneq F'_j|_{[n]-z} \subsetneq F'_{j+2}|_{[n]-z} \subsetneq ... \subsetneq F'_k|_{[n]-z} \subsetneq F_{k-1}|_{[n]-z} \subsetneq F_k|_{[n]-z}$ would form a chain of length $k+1$.
\end{proof}

\begin{claim}
\label{clm:t2} For any $\pi \in S_n$ and $1 \le \ell \le k-1$ if $\pi \in \pi^*(F_1,F_2,...,F_k)$ for some $k$-chain $(F_1,F_2,...,F_k)$ of type II-$\ell$ in $\cF^k$, then for any other $(F'_1,F'_2,...,F'_k)\in \cF^k$ we have $\pi \notin \pi^*(F'_1,F'_2,...,F'_k)$.
\end{claim}

\begin{proof}[Proof of Claim] 
Let $(F_1,F_2,...,F_k) \in \cF^k$ be a $k$-chain of type II-$\ell$, $x \in F_1, z\notin F_k$, $\sigma$ a permutation of $F_{\ell+1}\setminus F_\ell$ and $\pi \in \pi(x,z,\sigma,F_1,F_2,...,F_k)$. Assume towards a contradiction that $\pi \in \pi^*(F'_1,F'_2,...,F'_k)$ holds for some other $k$-chain in $\cF^k$. We consider cases according to the type of $(F'_1,F'_2,...,F'_k)$. Before starting the case analysis let us introduce the notation $a$ for $\sigma_1$ if $(F_1,F_2,...,F_k)$ is of type II-1 and $a=y_1$ otherwise. Similarly, let $a'$ denote $\sigma'_1$ if $(F'_1,F'_2,...,F'_k)$ is of type II-1 and $a'=y'_1$ otherwise.

\vskip 0.3truecm

\textsc{Case I}: $(F'_1,F'_2,...,F'_k)$ is of type I and $\pi \in \pi^*(x',z',F'_1,F'_2,...,F'_k)$.

Suppose first that $ind(a)\le ind(a')-3$. 
\[
......................
\begin{array}{ccccccccc}
. & . & ... & z' & ... & y'_{\ell-1-h-2} & y'_{\ell-1-h-1} & y'_{\ell-1-h} & ... \\
y_1 & y_2 & ... & y_i & ... & y_{\ell -1} & \sigma_1 & x & ...
\end{array}
........................
\]
Then if $(F_1,F_2,...,F_k)$ is of type II-1, then $\{\pi_j:j\le ind(x)\}=F_1+\sigma_1 \subseteq F'_1\cup z'$ and thus $F_1|_{[n]-z'} \subsetneq F'_1|_{[n]-z'} \subsetneq ...\subsetneq F'_k|_{[n]-z'}$ would contradict the $(n-1)$-trace $k$-Sperner property of $\cF$. If $(F_1,F_2,...,F_k)$ is of type II-$\ell$ with $\ell \ge 2$, then $ind(x)< ind(y'_{\ell-1})$ and thus $F_\ell-z' \subsetneq F'_{\ell-1}=F'_{\ell-1}-z'$ holds. Therefore the traces of $F_1,...,F_\ell,F'_{\ell-1}, F'_{\ell},...,F'_k$ on $[n]-z'$ form a chain and at most two of them may coincide (if $z'=y_i$ for some $i \le \ell-1$) which would still give us a chain of length $k+1$.

Suppose next that $ind(a) = ind(a')-2$. 
\[
......................
\begin{array}{ccccccccc}
z' & ... & . & . & ...  & y'_{\ell-3} & y'_{\ell-2} & y'_{\ell-1} & ... \\
. & ... & y_1 & y_2 & ...  & y_{\ell -1} & \sigma_1 & x & ...
\end{array}
........................
\]
Again, if $(F_1,F_2,...,F_k)$ is of type II-1, then $\{\pi_j:j\le ind(x)\}=F_1+\sigma_1 \subseteq F'_1\cup z'$ and thus $F_1|_{[n]-z'} \subsetneq F'_1|_{[n]-z'} \subsetneq ...\subsetneq F'_k|_{[n]-z'}$ would contradict the $(n-1)$-trace $k$-Sperner property of $\cF$. If $(F_1,F_2,...,F_k)$ is of type II-$\ell$ with $\ell \ge 2$, then $ind(x)= ind(y'_{\ell-1})$ and thus $F_\ell-z' \subsetneq F'_{\ell}=F'_{\ell}-z'$ holds. Note that the condition $\pi \in \pi^*(F'_1,F'_2,...,F'_k)$ implies that $ind(z')<ind(a)$ and thus $z' \in F_i$ for all $i=1,2,...,k$ and the traces $F_i|_{[n]-z'}$ are all distinct. Therefore the traces of $F_1,...,F_\ell, F'_{\ell},...,F'_k$ on $[n]-z'$ form a chain of length $k+1$ which contradicts the $(n-1)$-trace $k$-Sperner property of $\cF$.

Finally suppose that $ind(a)\ge ind(a')-1$. Then the largest index $i$ belonging to an element of $F_k \setminus F_{k-1}$ is strictly larger than $ind(x')$. Indeed, as $(F'_1,F'_2,...,F'_k)$ is of type I we have $ind(x')=ind(a')+k-1$ while since $(F_1,F_2,...,F_k)$ is of type II-$\ell$, we have $ind(x),ind(z)<i$, $|F_{\ell+1}\setminus F_\ell|\ge 2$ and thus $ind(a)+k+2 \le i$. From the inequality $ind(x')<i$ it follows that
\begin{itemize}
\item
if $(F_1,F_2,...,F_k)$ is of type II-$(k-1)$, then $ind(z)=i-1\ge ind(x')$ holds. Therefore  we have $F'_k-x'\subsetneq F_k-x'$ and thus the traces of $F'_1,F'_2,...,F'_k,F_k$ on $[n]-x'$ form a chain of length $k+1$ (note that $x'$ is contained in all $F'_j$'s and therefore omitting $x'$ does not effect their strict containment),
\item
if $(F_1,F_2,...,F_k)$ is of type II-$\ell$ for some $\ell <k-1$, then $x' \in F_{k-1},F_k$ or $x'=z$ holds and thus we have
$F'_k|_{[n]-z} \subsetneq F_{k-1}|_{[n]-z}\subsetneq F_k|_{[n]-z}$, where the first strict containment follows from $z' \notin F'_k, z'\in F_{k-1}$. As omitting $z$ can make at most two of the traces of the $F'_j$'s coincide, $k-1$ of these traces together with $F_{k-1}|_{[n]-z}, F_k|_{[n]-z}$ would still form a chain of length $k+1$ contradicting the $(n-1)$-trace $k$-Sperner property of $\cF$.
\end{itemize}

\vskip 0.3truecm

\textsc{Case II}: $(F_1,F_2,...,F_k)$ and $(F'_1,F'_2,...,F'_k)$ are both of type II-$\ell$.

Suppose $\pi \in \pi^*(x,z,\sigma,F_1,F_2,...,F_k)\cap \pi^*(x',z',\sigma'F'_1,F'_2,...,F'_k)$. Let us first assume that $ind(a)>ind(a')+1$ and thus $ind(x') \le ind(y_{\ell-1})$ and therefore $\sigma'_1,x' \in F_{\ell}$ hold. Consequently, $F'_\ell \subsetneq F_\ell$ holds and hence $F'_1, ...,F'_{\ell},F_{\ell},...,F_k$ form a chain of length $k+1$. 

Assume next that $ind(a)=ind(a')+1$. 
\[
......................
\begin{array}{ccccccc}
. & y_1 & ... & y_{\ell-2} & y'_{\ell-1} & \sigma_1 & ... \\
y'_1 & y'_2 & ...  & y'_{\ell -1} & \sigma'_1 & x' & ...
\end{array}
........................
\]
Then we have $x'=\sigma_1$ and $\sigma'_1\in F_{\ell}$ holds, thus we have $F'_{\ell}|_{[n]-x'} \subsetneq F_{\ell}|_{[n]-x'}$. Therefore, as $x' \in F'_i$ for all $i$ and $x'\in F_j$ for all $j \ge \ell+1$, we obtain a chain $F'_1|_{[n]-x'} \subsetneq ... F'_{\ell}|_{[n]-x'} \subsetneq F_{\ell}|_{[n]-x'} \subsetneq ... \subsetneq F_k|_{[n]-x'}$ of length $k+1$.

Assume finally that $a=a'$ and thus $x=x'$. Suppose first that $ind(z')<ind(z)$. Then $F'_1|_{[n]-z}\subsetneq F'_{\ell+1}|_{[n]-z} \subsetneq F_{\ell+1}|_{[n]-z} \subsetneq ... \subsetneq F_k|_{[n]-z}$ is a chain of length $k+1$.

Suppose then that $x=x',z=z'$. Therefore there must exist $j \ge \ell+2$ such that the largest index of an element in $F_j$ is different, say larger, than the one in $F'_j$. Indeed, otherwise $(F_1,F_2,...,F_k)$ and $(F'_1,F'_2,...,F'_k)$ would be the same. Let $j_0$ be the smallest such number. Then $F'_1,...,F'_{j_0},F_{j_0},...,F_k$ form a chain of length $k+1$.

\vskip 0.3truecm

\textsc{Case III}: $(F_1,F_2,...,F_k)$ is of type II-$\ell$ and $(F'_1,F'_2,...,F'_k)$ is of type II-$\ell'$ for some $\ell' \ne \ell$.

First we claim that $|ind(a)-ind(a')| \le 1$. Suppose not and, say, $ind(a)+2 \le ind(a')$. If $(F_1,F_2,...,F_k)$ is of type II--1, then $F_1 \subsetneq F'_1$ and thus $F_1,F_1,...,F_k$ form a chain of length $k+1$ contradicting the $(n-1)$-trace $k$-Sperner property of $\cF$. If $(F_1,F_2,...,F_k)$ is of type II-$\ell$ for some $\ell \ge 2$, then $F_1-x,F_2-x \subseteq F'_1$. If $x=x'$, then $F_1 \subsetneq F'_1$ and $F_1$ together with all the $F'_j$s form a chain of length $k+1$. If $x \neq x'$, then $F_2-x \subsetneq F'_1$ holds, and as omitting $x$ may make at most two $F'_j$s coincide, the $F_1|_{[n]-x},F_2|_{[n]-x}$ and $k-1$ different $F'_j|_{[n]-x}$'s would form a chain of length $k+1$.

\vskip 0.2truecm

By symmetry wlog we may assume $\ell > \ell'$. By the above we have to consider three cases according to $ind(a)-ind(a')=0,\pm 1$.

\vskip 0.2truecm

\textsc{Subcase III/A}: $ind(a)=ind(a')$.

If $\ell=\ell'+1$, then $x'=\sigma_1$ holds. Therefore omitting $x'$ we have $F'_1|_{[n]-x'} \subsetneq F_1|_{[n]-x'} \subsetneq ... \subsetneq F_k|_{[n]-x'}$ a chain of length $k+1$.

If $\ell\ge \ell'+2$, then $F_1-x\subsetneq F'_1$ and $F'_{\ell'}+\sigma'_1=F_{\ell'+2}-x \subseteq F'_{\ell'+1}$ hold. 
\[
......................
\begin{array}{ccccc}
y_1 & ... & y_{\ell'-1} & y_{\ell'} & y_{\ell'+1}  \\
y'_1 & ... & y'_{\ell' -1} & \sigma'_1 & x' 
\end{array}
........................
\]
Furthermore $F_{\ell'+2}-x = F'_{\ell'+1}$ if and only if $|F'_{\ell'+1} \setminus F'_{\ell'}|=2$ and $x=\sigma'_2$. In this case all $F'_i|_{[n]-x}$'s are different and hence $F_1|_{[n]-x} \subsetneq F'_1|_{[n]-x} \subsetneq ... \subsetneq F'_k|_{[n]-x}$ is a $(k+1)$-chain. Otherwise we would obtain a $(k+1)$-chain by $F_1|_{[n]-x} \subsetneq F'_1|_{[n]-x} \subsetneq ... \subsetneq F'_{\ell'}|_{[n]-x} \subsetneq F_{\ell'+2}|_{[n]-x} \subsetneq F'_{\ell'+1}|_{[n]-x}$ and adding all but at most one further $F'_j|_{[n]-x}$.

\vskip 0.2truecm

\textsc{Subcase III/B}: $ind(a)=ind(a')-1$.

If $\ell=\ell'+1$, then $x=x'$ holds and thus $F_1,F'_1,...F'_k$ form a chain of length $k+1$. If $\ell \ge \ell'+2$, then we have $|F_{\ell'+2}\setminus F_{\ell'+1}|=1$ and $\sigma'_1=y_{\ell'+1}$.
\[
......................
\begin{array}{ccccccc}
y_1 & y_2 & ... & y_{\ell'} & y_{\ell'+1} & ... & x \\
. & y'_1 & ... & y'_{\ell' -1} & \sigma'_1  & ... & .
\end{array}
........................
\]
Therefore $F_{\ell'+2}-x \subsetneq F'_{\ell'+1}$ and as at least $k-\ell'-1$ of $F'_{\ell'+1},...,F'_k$ form a chain even after omitting $x$, the traces of these and those of $F_1,...,F_{\ell'+2}$ on $[n]-x$ would form a chain of length $k+1$.

\vskip 0.2truecm

\textsc{Subcase III/C}: $ind(a)=ind(a')+1$.
\[
......................
\begin{array}{cccccccc}
. &y_1 & ... & y_{\ell'-2} & y_{\ell'-1} & y_{\ell'} & ... & x \\
y'_1 & y'_2 &... & y'_{\ell' -1} & \sigma'_1  & x' & ... & .
\end{array}
........................
\]
Then $\sigma'_1=y_{\ell'-1}$, $x'=y_{\ell'}$ and $ind(x')<ind(x)$. Thus $F'_1|_{[n]-x'} \subsetneq ... F'_{\ell'}|_{[n]-x'} \subsetneq F_{\ell'-1}|{[n]-x'} \subsetneq F_{\ell'}|_{[n]-x'} \subsetneq F_{\ell'+2}|_{[n]-x'} \subsetneq ... \subsetneq F_k|_{[n]-x'}$ would be a chain of length $k+1$ contradicting the $(n-1)$-trace $k$-Sperner property of $\cF$.
\end{proof}

By \clref{t1} and \clref{t2} we obtain $s^*(F_1,F_2,...,F_k) \le 2$ if $(F_1,F_2,...,F_k) \in \cF^k$ is of type I and $s^*(F_1,F_2,...,F_k) = 1$ if $(F_1,F_2,...,F_k) \in \cF^k$ is of type II-$\ell$ for any $1 \le \ell \le k-1$. Therefore by \clref{t1good} and \clref{t2+good} we have
$$c^-\ge \sum_{(F_1,F_2,...,F_k)\in \cF^k}\frac{|\cC^*(F_1,F_2,...,F_k)|}{|s^*(F_1,F_2,...,F_k)|} \ge$$
$$\sum_{\substack{(F_1,F_2,...,F_k)\in \cF^k \\ \textnormal{type I}}}\frac{(|F_1|-2)(n-|F_k|)!}{2}\prod_{i=1}^k(|F_i|-|F_{i-1}|)! +\sum_{\substack{(F_1,F_2,...,F_k)\in \cF^k \\ \textnormal{type II}}}(n-|F_k|)!\prod_{i=1}^k(|F_i|-|F_{i-1}|)! \ge$$
$$ \sum_{(F_1,F_2,...,F_k)\in \cF^k}(n-|F_k|)!\prod_{i=1}^k(|F_i|-|F_{i-1}|)!= c^+$$
The moreover part of the statement follows as if $|F_1|\ge 5$ for some $(F_1,F_2,...,F_k) \in \cF^k$ of type I, then for that particular summand we have at least a $3/2$ fraction more than what we need.
\end{proof}

\begin{corollary}
\label{cor:lym}
Let $\cF$ be an $(n-1)$-trace $k$-Sperner family such that $4 \le |F| \le n-1$ holds for all $F \in \cF$. Then the inequality $$\sum_{F \in \cF}\frac{1}{\binom{n}{|F|}} \le k-1$$ holds. Moreover, if there exists a $k$-chain $(F_1,F_2,...,F_k) \in \cF^k$ of type I with $5\le |F_1|$, then the inequality is strict.
\end{corollary}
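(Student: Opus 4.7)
The plan is a standard LYM-type double counting of incidences between $\cF$ and maximal chains of $2^{[n]}$, where the key input is that \lref{chains} tightens the trivial per-chain bound $|\cF\cap\cC|\le k$ on average.

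First, I would count the quantity $N=\sum_{\cC}|\cF\cap \cC|$, where the sum is over all maximal chains of $2^{[n]}$. On the one hand, swapping the order of summation, each $F\in\cF$ lies in exactly $|F|!(n-|F|)!$ maximal chains, so $N=\sum_{F\in\cF}|F|!(n-|F|)!$. On the other hand, split the maximal chains according to the value of $|\cF\cap\cC|$: by the observation at the start of the proof of \tref{l1}, under the hypothesis $4\le |F|\le n-1$ we have $|\cF\cap\cC|\le k$ for every maximal chain, so
\[
N\;\le\;(k-2)c^{-}+(k-1)c+k\,c^{+}\;=\;(k-1)(c^{-}+c+c^{+})+(c^{+}-c^{-})\;=\;(k-1)\,n!+(c^{+}-c^{-}).
\]
Now \lref{chains} gives $c^{-}\ge c^{+}$, so $N\le (k-1)n!$. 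Dividing both sides by $n!$ yields
\[
\sum_{F\in\cF}\frac{1}{\binom{n}{|F|}}\;=\;\frac{N}{n!}\;\le\;k-1,
\]
which is the claimed inequality.

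For the moreover part, if $\cF^k$ contains a type I $k$-chain $(F_1,\dots,F_k)$ with $|F_1|\ge 5$, then the strict form of \lref{chains} gives $c^{-}>c^{+}$, hence $c^{+}-c^{-}\le -1$, so $N\le (k-1)n!-1$ and the LYM sum is strictly less than $k-1$. There is no real obstacle here: the only nontrivial ingredient is \lref{chains}, which has already been established, and the rest is the classical Bollob\'as/LYM double counting applied with the sharpened per-chain average.
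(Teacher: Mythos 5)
Your proof is correct and follows exactly the same LYM-style double counting as the paper: count incidences $(F,\cC)$ between sets of $\cF$ and maximal chains, bound the total above by $(k-2)c^-+(k-1)c+kc^+$, and invoke \lref{chains} to conclude. The only difference is that you spell out the algebraic rearrangement to $(k-1)n!+(c^+-c^-)$, which the paper leaves implicit.
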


\begin{proof}
Let us count the pairs $(F,\cC)$ where $\cC$ is a maximal chain and $F \in \cF\cap \cC$. On the one hand the number of such pairs is $\sum_{F \in \cF}|F|!(n-|F|)!$, on the other hand this is at most $k\cdot c^++(k-1)c+(k-2)c^-$ which is, by \lref{chains}, at most $(k-1)\cdot n!$. Dividing by $n!$ gives the statement and the moreover part follows from the moreover part of \lref{chains}.
\end{proof}

\begin{proof}[Proof of \tref{l1}]
Let $\cF \subseteq 2^{[n]}$ be an $(n-1)$-trace $k$-Sperner family and suppose first that there exists a set $F \in \cF$ with $|F|=0,1,2,3$ or $n$. If $F=\emptyset$ or $[n]$, then $\cF'=\{F \in \cF: 2 \le |F| \le n-2\}$ is $(n-1)$-trace $(k-1)$-Sperner. Indeed, the trace $[n]|_{[n]-x}=[n]-x$ always strictly contains the trace of any set of size at most $n-2$ and the trace $\emptyset|_{[n]-x}=\emptyset$ is always strictly contained in the trace of any set of size at least two. Therefore any $k$-chain in $\cF'|_{[n]-x}$ could be extended to a $(k+1)$-chain in $\cF|_{[n]-x}$. Using \tref{asy} if $k \ge 3$ we obtain that $|\cF'| \le (1+O(\frac{1}{n^{1/3}}))\Sigma(n,k-2)$ and thus $|\cF|\le 2+2n+(1+O(\frac{1}{n^{1/3}}))\Sigma(n,k-2)$, which is strictly less than $\Sigma(n,k-1)$ if $n$ is large enough, while if $k=2$ then \tref{l1} gives that $|\cF'| \le O(\frac{1}{n}\Sigma(n,1))$ and thus $|\cF|<\Sigma(n,1)$ if $n$ is large enough.

Suppose next that there exists $F_0 \in \cF$ with $1 \le |F|\le 3$. For every subset $S$ of $F_0$, let us write $\cF_S=\{F \in \cF: F \cap F_0=S, 5 \le |F| \le n-1\}$. Observe that for any $S \subseteq F_0$ the family $\cF_S|_{[n]\setminus F_0}$ is $(n-|F_0|-1)$-trace $k$-Sperner and $|\cF_S|=|\cF_S|_{[n]\setminus F_0}|$. Therefore, by \tref{asy}, we obtain $|\cF_S| \le (1+o(1))\Sigma(n-|F_0|,k-1)=(1/2^{|F_0|}+o(1))\Sigma(n,k-1)$. Furthermore, the family $\cF_{F_0}|_{[n]\setminus F_0}$ is $(n-|F_0|-1)$-trace $(k-1)$-Sperner. Indeed, as $|F_0| \le 3$ and all sets in $\cF_{F_0}$ have size at least five, we have $F_0|_{[n]-x} \subsetneq F|_{[n]-x}$ for any $F \in \cF_{F_0}$ and $x \in [n]$ and thus adding $F_0|_{[n]-x}$ to any $k$-chain in $\cF_{F_0}|_{[n]-x}$ would create a $(k+1)$-chain in $\cF|_{[n]-x}$. If $k \ge 3$, then \tref{asy} yields $|\cF_{F_0}| \le (1+o(1))\Sigma(n-|F_0|,k-2)=(1/2^{|F_0|}+o(1))\Sigma(n,k-2)=(\frac{k-2}{(k-1)2^{|F_0|}}+o(1))\Sigma(n,k-1)$, and thus
\[
|\cF| \le \sum_{i=1}^4\binom{n}{i}+\sum_{S \subseteq F_0}|\cF_S| <\Sigma(n,k-1),
\]
provided $n$ is large enough. If $k=2$, then \tref{l1} gives that $|\cF_{F_0}| \le O(\frac{1}{n}\Sigma(n-|F_0|,1))$ and thus 
\[
|\cF|\le \sum_{i=1}^4\binom{n}{i}+\left(\frac{2^{|F_0|}-1}{2^{|F_0|}}+o(1)\right)\Sigma(n,1) <\Sigma(n,1)
\] if $n$ is large enough.

\vskip 0.3truecm

We are left with the case when $\cF$ does not contain any set of size 0, 1, 2, 3  or $n$. \cref{lym} yields the statement $|\cF| \le \Sigma(n,k-1)$ and also the uniqueness of the extremal family if $n+k$ is even. If $n+k$ is odd, then $\cF$ must contain only sets of size between $\lfloor \frac{n-k+1}{2} \rfloor$ and $\lceil \frac{n+k-1}{2} \rceil$ and thus cannot contain sets of size four if $\lfloor \frac{n-k+1}{2} \rfloor \ge 5$ holds. By the moreover part of \cref{lym}, $\cF$ cannot contain a $k$-chain of type I with even the smallest set having size at least five. The sizes of the largest and smallest set of a $k$-chain of type II must differ by at least $k$ and thus at least one of them is outside the interval between $\lfloor \frac{n-k+1}{2} \rfloor$ and $\lceil \frac{n+k-1}{2} \rceil$. We obtained that an $(n-1)$-trace $k$-Sperner family of size $\Sigma(n,k-1)$ must be $(k-1)$-Sperner. Then the uniqueness follows from the uniqueness part of Erd\H os's result.
\end{proof}


\begin{thebibliography}{99}
\bibitem{E}
\textsc{P. Erd\H os}, On a lemma of Littlewood and Offord, \textit{Bull. Amer. Math. Soc.},
\textbf{51} (1945), 898–-902.
\bibitem{P1}
\textsc{B. Patk\'os}, $l$-trace $k$-Spener families, \textit{Journal of Combinatorial Theory A}, 
\textbf{116} (2009) 1047--1055.
\bibitem{P2}
\textsc{B. Patk\'os}, A note on traces of set families, \textit{Moscow Journal of Combinatorics and Number Theory}, \textbf{2} (2012) 47--55.
\bibitem{Sa}
\textsc{N. Sauer}, On the density of families of sets,
\textit{Journal of Combinatorial Theory A} \textbf{13} (1972),
145-147.
\bibitem{Sh}
\textsc{S. Shelah} A combinatorial problem; stability and order for
models and theories in infinitary languages, \textit{Pacific J.
Math} \textbf{41} (1972), 271-276.
\bibitem{S}
\textsc{E. Sperner}, Ein Satz \"uber Untermengen einer endlichen Menge, \textit{Math.
Z.}, \textbf{27} (1928), 544–-548.
\bibitem{VC}
\textsc{V.N. Vapnik, A. Ya Chernovenkis}, On the uniform convergence
of relative frequencies of events to their probabilities,
\textit{Theory Probab Appl.} \textbf{16} (1971), 264-280.
\end{thebibliography}
\end{document}